\let\ring\circ
\def\Cbar{\bar\C}
\def\Kbar{\bar K}
\def\kbar{{\bar k}}
\def\Abar{\bar\A}
\def\H{H^1_{\text{\'et}}}
\def\II{{\mathcal I}}
\def\JJ{{\mathcal J}}
\def\KK{{\mathcal K}}
\begin{document}

\title{Tate module and bad reduction}
\author{Tim and Vladimir Dokchitser,  Adam Morgan}

\address{Department of Mathematics, University of Bristol, Bristol BS8 1TW, UK}
\email{tim.dokchitser@bristol.ac.uk}
\address{Department of Mathematics, University College London, Gower Street, London, WC1E 6BT, UK}
\email{v.dokchitser@ucl.ac.uk}
\address{Max-Planck-Institut für Mathematik, Vivatsgasse 7, 53111 Bonn,
Germany}
\email{am516@mpim-bonn.mpg.de}

\keywords{Tate module, semistable reduction, semilinear action}
\subjclass[2010]{11G20 (11G25, 14F20, 11G07, 11G10)}

\begin{abstract}
Let $C/K$ be a curve over a local field. 
We study the natural semilinear action of Galois on the minimal regular model of $C$ over a field $F$ where it becomes semistable.
This allows us to describe the Galois action on the $l$-adic Tate module of the Jacobian of $C/K$ in terms of the special fibre of this model over $F$. 
\end{abstract}

\llap{.\hbox to 20em{\hfill}}
\vskip -1cm

\maketitle

\section{Introduction}

Let $C/K$ be a curve\footnote{Smooth, proper, geometrically connected.} of positive genus over a non-Archimedean local field\footnote{Our convention is that a local field is a discretely valued field with finite residue field.}, with Jacobian $A/K$.
Our goal is to describe the action of the absolute Galois group $G_K$ on the $l$-adic Tate module $T_l A$ 
in terms of the reduction of $C$ over a field where $C$ becomes semistable,
for $l$ different from the residue characteristic.

Fix a finite Galois extension $F/K$ over which $C$ is semistable \cite{DM}.
Write $\mathcal{O}_F$ for the ring of integers of $F$, $k_F$ for the residue field of $F$, $I_F$ for the inertia group, 
$\cC/\mathcal{O}_F$ for the minimal regular model of $C/F$, and $\cC_{k_F}/k_F$ for its
special fibre. For any field $L$, we denote by $\bar{L}$ the separable closure of $L$.

Grothendieck defined a canonical filtration 
by $G_F$-stable $\Z_l$-lattices \cite[IX, \S12]{SGA7I},
\begin{equation}\label{eq1}
  0\subset T_l(A)^t \subset T_l(A)^{I_F} \subset T_l(A);
\end{equation}
 $T_l(A)^t$ is sometimes referred to as the ``toric part''.
He showed that the graded pieces of the filtration are unramified $G_F$-modules and are, canonically,
\begin{equation}\label{eq2}
  H^1(\Upsilon,\Z) \tensor_\Z\Z_l(1), \qquad T_l \Pic^0 \tilde\cC_{\bar k_F}, \qquad  H_1(\Upsilon,\Z) \tensor_\Z\Z_l,
\end{equation}%
where $\tilde\cC_{\bar k_F}$ is the normalisation of $\cC_{\bar k_F}$, 
$\Upsilon$ is the dual graph of $\cC_{{\bar k}_F}$ 
(a vertex for each irreducible component and an edge for every ordinary double point) 
and $H^1, H_1$ are singular (co)homology groups. Here the middle piece may be further decomposed as\footnote{Here
$\Ind_H^G(\cdot)$ stands for ${\Z_l[G]}\tensor_{\Z_l[H]}(\cdot)$.} 
\begin{equation}\label{eqind}
  T_l \Pic^0 (\tilde \cC_{\bar k_F}) \iso \bigoplus_{\Gamma\in \JJ/G_F} \Ind_{\Stab(\Gamma)}^{G_F} T_l\Pic^0(\Gamma),
\end{equation}
where $\JJ$ is the set of connected components of $\tilde\cC_{\bar k_F}$. 

In particular (cf. \cite[\S2.10]{CFKS}), the above discussion determines the first $l$-adic \'etale cohomology group of $C$ as a $G_F$-module:
\begin{equation}\label{eq3}
  \H(C_{\Kbar},\Q_l) \>\>\iso\>\> H^1(\Upsilon,\Z)\!\tensor\!\Sp_2 \>\oplus\> \H(\tilde\cC_{\bar k_F},\Q_l),
\end{equation}
where $\Sp_2$ is the  2-dimensional `special' representation  (see \cite[4.1.4]{Ta}).

In this paper we describe the full $G_K$-action on $T_l(A)$ in terms of this filtration, even though $C$ may not be semistable over $K$. 


\begin{theorem}
\label{main}
The filtration \eqref{eq1} of $T_l(A)$ is independent of the choice of $F/K$ and is $G_K$-stable. 
Moreover, $G_K$ acts semilinearly\footnote{see Definition \ref{def:semilinearaction}} 
on $\mathcal{C}/\mathcal{O}_F$, inducing actions on $\cC_{k_F}$, $\Upsilon$, $\Pic^0 \cC_{\bar k_F}$ 
and $\Pic^0 \tilde\cC_{\bar k_F}$, with respect to which  \eqref{eq2} identifies the graded pieces 
as $G_K$-modules and \eqref{eqind} extends to a $G_K$-isomorphism
$$
  T_l \Pic^0 (\tilde \cC_{\bar k_F}) \iso \bigoplus_{\Gamma\in \JJ/G_K} \Ind_{\Stab(\Gamma)}^{G_K} T_l\Pic^0(\Gamma).
$$
The action of $\sigma \in G_K$ on $\cC_{k_F}$ is uniquely determined by its action on non-singular points, where  it  is given by 
$$
  \qquad
  \cC_{k_F}({\bar k}_F)_{\ns} \!\overarrow{\text{lift}}\! \cC(\mathcal{O}_{F^{\textup{nr}}})=C(F^{\textup{nr}}) 
    \!\overarrow{\sigma}\! C(F^{\textup{nr}}) =\cC(\mathcal{O}_{F^{\textup{nr}}}) \!\overarrow{\text{reduce}}\! \cC_{k_F}({\bar k}_F)_{\ns},
$$
where $F^{\textup{nr}}$ denotes the maximal unramified extension of $F$.
(Whilst there may be many choices of lift, the composite map from left to right is independent of this choice, cf. Theorem \ref{geommain} (3).)
\end{theorem}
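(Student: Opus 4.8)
The plan is to construct the semilinear $G_K$-action directly from the uniqueness of the minimal regular model, and then to transport it through each of Grothendieck's constructions, all of which are functorial by nature. First I would build the action on $\cC/\mathcal{O}_F$. Since $F/K$ is Galois, every $\sigma\in G_K$ preserves $F$ and induces automorphisms $\sigma\colon\mathcal{O}_F\to\mathcal{O}_F$ and $\bar\sigma\colon k_F\to k_F$ over $\mathcal{O}_K$ and the residue field of $K$. Base-changing $\cC$ along $\sigma$ gives a regular model of $C_F\times_{F,\sigma}F$; as $C$ descends to $K$ there is a canonical $F$-isomorphism $C_F\times_{F,\sigma}F\cong C_F$, so this base change is again a regular model of $C_F$. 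Because $C$ has positive genus, the minimal regular model is unique up to unique isomorphism, so the base change is canonically identified with $\cC$, and the resulting $\sigma_*\colon\cC\to\cC$ is semilinear over $\sigma$. Uniqueness applied once more yields the cocycle relation $\sigma_*\ring\tau_*=(\sigma\tau)_*$, so this is a genuine semilinear action in the sense of Definition \ref{def:semilinearaction}. Restricting to the special fibre and using functoriality of normalisation, dual graph and $\Pic^0$ then produces the induced semilinear actions on $\cC_{k_F}$, $\tilde\cC_{k_F}$, $\Upsilon$, $\Pic^0\cC_{k_F}$ and $\Pic^0\tilde\cC_{k_F}$.

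Next I would pin down the action on points and deduce uniqueness. Because $C$ is semistable over $F$, the special fibre $\cC_{k_F}$ is reduced, so any semilinear automorphism is determined by its effect on closed points; since the non-singular locus is dense, this gives the uniqueness clause. At a non-singular point $\cC$ is smooth over $\mathcal{O}_F$, so the point lifts to $\cC(\mathcal{O}_{\bar F})=C(\Kbar)$, and by construction $\sigma_*$ is obtained from the action of $\sigma$ on $\Kbar$-points via the descent isomorphism; hence evaluating $\sigma_*$ there is precisely the displayed lift--apply-$\sigma$--reduce recipe, with independence of the lift automatic as the value of the scheme morphism $\sigma_*$.

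For the filtration \eqref{eq1} I would first observe that $G_F$ and $I_K$ are both normal in $G_K$, so $I_F=G_F\cap I_K$ is normal in $G_K$, whence $T_l(A)^{I_F}$ is $G_K$-stable. Since $A$ is semistable over $F$, inertia acts unipotently through the monodromy operator, giving the canonical description $T_l(A)^t=\sum_{\sigma\in I_F}(\sigma-1)T_l(A)$; for $g\in G_K$ one has $g(\sigma-1)g^{-1}=(g\sigma g^{-1}-1)$ with $g\sigma g^{-1}\in I_F$, so this subspace is $G_K$-stable as well. Independence of $F$ follows by comparing any two admissible fields inside their compositum and using that, once semistable reduction is attained, passing to a further extension changes neither the rank of the inertia invariants nor the toric part, so the inclusions of filtrations are equalities.

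Finally, the identifications. The isomorphisms \eqref{eq2} and \eqref{eqind} are built functorially from $\cC/\mathcal{O}_F$ through the N\'eron model of $A$ and its identification with the relevant part of $\Pic^0_{\cC/\mathcal{O}_F}$, and are already known to be $G_F$-equivariant. To upgrade to $G_K$-equivariance I would feed the semilinear $\sigma_*$ through the same recipe and check that the induced map on $T_l(A)$ agrees with the genuine action of $\sigma$; once this holds, $G_K$-equivariance of \eqref{eq2} and the induced-module decomposition over $G_K$ follow formally, since induction from $\Stab$ over $G_F$-orbits refines to $G_K$-orbits. The main obstacle is exactly this last compatibility for $\sigma\notin G_F$: such $\sigma$ move the base field, so one is comparing the construction over $\mathcal{O}_F$ with its $\sigma$-twist, and the equivariance must be extracted from the canonical descent isomorphism $C_F\times_{F,\sigma}F\cong C_F$ together with the compatibility of the N\'eron model with $\Pic^0\cC$ under base change. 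This single comparison is where the real work lies; everything else is bookkeeping around it.
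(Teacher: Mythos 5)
Your construction of the semilinear action is sound and is essentially the paper's own route: twist $\cC$ by $\sigma$, observe that the twist is again a minimal regular model of (a twist of) $C_F$, and invoke uniqueness of minimal regular models to get the extension, the cocycle relation, and the lift--act--reduce formula (this is Theorem \ref{geommain} together with Remark \ref{assumptionshold}). The genuine gap is that you stop exactly where the theorem's main content begins: the identification of the graded pieces \eqref{eq2} and the decomposition \eqref{eqind} as $G_K$-modules. You describe this as ``a single comparison'' to be ``extracted from the canonical descent isomorphism'' and call the rest bookkeeping, but it is not one comparison and it is not formal. Concretely, the paper needs: (i) $G_K$-equivariance of the Serre--Tate reduction isomorphism $T_l(A)^{I_F}\iso T_l\Pic^0_{\cC_{k_F}/k_F}$, which rests on equivariance of reduction maps (Theorem \ref{geommain}(2)) and on compatibility of the N\'eron model with $\Pic^0_{\cC/\mathcal{O}_F}$ (Lemma \ref{compatibility of geom actions}, checked on generic fibres via separatedness); (ii) an equivariant re-run of Grothendieck's dual-graph computation --- the normalisation sheaf sequence, its cohomology, and the Mayer--Vietoris sequence for $\Upsilon$ --- to split off $H^1(\Upsilon,\Z)\tensor_\Z\Z_l(1)$ and $T_l\Pic^0(\tilde\cC_{k_F})$ (Theorem \ref{dual graph theorem}); (iii) an argument that the copy of $H^1(\Upsilon,\Z)\tensor_\Z\Z_l(1)$ so produced actually maps onto $T_l(A)^t$ (saturation, rank count, and Frobenius weights; Remark \ref{rem:toric part}); and (iv) the top graded piece $T_l(A)/T_l(A)^{I_F}\iso H_1(\Upsilon,\Z)\tensor_\Z\Z_l$, which does not come from the model at all but from Grothendieck's orthogonality theorem plus $G_K$-equivariance of the Weil pairing under the canonical principal polarisation (Corollary \ref{corGsplit}). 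Your proposal never mentions this last graded piece, and without (iii) and (iv) even the statement that \eqref{eq2} lists the graded pieces in the right order is unproved.

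There is also a concrete error in your stability argument: the identity $T_l(A)^t=\sum_{\sigma\in I_F}(\sigma-1)T_l(A)$ is false as an equality of lattices. For a Tate curve over $F$ with parameter $q$ and $l\mid v(q)$, the right-hand side is $v(q)\cdot\Z_l(1)$ inside $T_l(A)^t=\Z_l(1)$, a subgroup of index $l^{v_l(v(q))}$; the identity holds only rationally, i.e.\ after $\tensor\,\Q_l$. The repair is easy --- $T_l(A)^t$ is the saturation of that sum (its quotient in $T_l(A)$ is torsion-free), and saturations of $G_K$-stable sublattices are $G_K$-stable, with your observation that $I_F=G_F\cap I_K$ is normal in $G_K$ doing the rest --- and this direct group-theoretic stability argument is a legitimate alternative to the paper's (which deduces stability from the identifications themselves). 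Likewise your independence-of-$F$ sketch silently uses that the toric rank and invariant rank are unchanged by further extension once $A$ is semistable, which is exactly the paper's appeal to the identity component of the N\'eron model commuting with base change; that should be said rather than assumed.
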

%
%

\begin{corollary} 
\label{cor16}
There is an isomorphism of $G_K$-modules
$$
\begin{array}{llllll}
  \H(C_{\Kbar},\Q_l) &\iso& H^1(\Upsilon,\Z)\!\tensor\!\Sp_2 \>\oplus\> \H(\tilde\cC_{{\bar k}_F},\Q_l) \\[5pt]
       &\iso& \displaystyle H^1(\Upsilon,\Z)\!\tensor\!\Sp_2 \>\oplus\> 
      \bigoplus_{\Gamma\in \JJ/G_K} \Ind_{\Stab(\Gamma)}^{G_K} \H(\Gamma,\Q_l).
\end{array}
$$
\end{corollary}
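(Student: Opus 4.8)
The plan is to read off the corollary from Theorem~\ref{main} via the standard duality between $\H(C_{\Kbar},\Q_l)$ and the Tate module of its Jacobian. First I would invoke the $G_K$-equivariant duality between $\H(C_{\Kbar},\Q_l)$ and $T_l(A)\tensor\Q_l$ coming from the Weil pairing (equivalently, the cup product on the curve); this is a statement over $G_K$ and does not require semistability. Theorem~\ref{main} tells me that the filtration \eqref{eq1} is $G_K$-stable, with the graded pieces of \eqref{eq2} identified as $G_K$-modules, so tensoring with $\Q_l$ and dualising produces a $G_K$-stable three-step filtration of $\H(C_{\Kbar},\Q_l)$ whose graded pieces are the $\Q_l$-duals of those in \eqref{eq2}.

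The next step is to recover the shape of \eqref{eq3}. The middle graded piece $T_l\Pic^0(\tilde\cC_{k_F})\tensor\Q_l$ dualises to $\H(\tilde\cC_{{\bar k}_F},\Q_l)$, which by the $G_K$-version of \eqref{eqind} supplied by the theorem decomposes as $\bigoplus_{\Gamma\in\JJ/G_K}\Ind_{\Stab(\Gamma)}^{G_K}\H(\Gamma_{{\bar k}_F},\Q_l)$; this accounts for the second isomorphism. The two outer graded pieces $H^1(\Upsilon,\Z)\tensor\Q_l(1)$ and $H_1(\Upsilon,\Z)\tensor\Q_l$ differ by a Tate twist and are exchanged by the identification $H_1(\Upsilon,\Z)\iso H^1(\Upsilon,\Z)^\vee$, which is $G_K$-equivariant for the semilinear action on $\Upsilon$ from the theorem; the monodromy operator then provides the $G_K$-equivariant nilpotent gluing these into the single summand $H^1(\Upsilon,\Z)\tensor\Sp_2$.

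Finally I would argue that the filtration splits as a direct sum. The middle piece is pure of weight $1$, whereas $H^1(\Upsilon,\Z)\tensor\Sp_2$ carries weights $0$ and $2$; since the monodromy weight filtration is preserved by the whole semilinear $G_K$-action, the weight-$1$ part is a canonical $G_K$-subquotient that splits off $G_K$-equivariantly, exactly as in the classical $G_F$-statement \eqref{eq3}. In effect the derivation of \eqref{eq3} from the Grothendieck filtration is re-run, with each arrow now $G_K$-equivariant by Theorem~\ref{main}.

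The main obstacle I anticipate is in bookkeeping the equivariance rather than in any new geometric input: one must check that the duality of the first step, the identification $H_1(\Upsilon,\Z)\iso H^1(\Upsilon,\Z)^\vee$, the monodromy operator, and the weight splitting are all compatible with the \emph{semilinear} $G_K$-action (and not merely the linear $G_F$-action), so that the known $G_F$-decomposition \eqref{eq3} genuinely upgrades to a $G_K$-isomorphism.
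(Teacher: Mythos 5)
Your proposal is correct and is essentially the paper's own (implicit) derivation: the paper states this corollary as a direct consequence of Theorem~\ref{main} together with the classical semistable description \eqref{eq3}, i.e.\ one dualizes the now $G_K$-stable filtration \eqref{eq1} with its identified graded pieces, splits off the weight-one part $\H(\tilde\cC_{{\bar k}_F},\Q_l)$ by the Frobenius-weight argument, and uses the ($G_K$-equivariant) monodromy operator, whose induced map between the outer graded pieces is an isomorphism by nondegeneracy of the monodromy pairing, to recognise the remaining extension as $H^1(\Upsilon,\Z)\!\tensor\!\Sp_2$. Your closing point---that the only real work is checking every step of the classical argument is equivariant for the semilinear $G_K$-action rather than just the $G_F$-action---is exactly what Theorem~\ref{main} is designed to supply.
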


\begin{remark}
Suppose $\sigma\in \Stab_{G_K}(\Gamma)$ acts on $\bar{k}_F$ as a non-negative integer power of Frobenius $x\mapsto x^{|k_K|}$. Its (semilinear) action on the points of $\Gamma(\bar{k}_F)$ coincides with the action of a $k_F$-linear morphism (see Remark \ref{rmk-frob}). In particular, one can determine trace of $\sigma$ on $\H(\Gamma,\Q_l)=\Hom(T_l\Pic^0(\Gamma),\Q_l)$ using the Lefschetz trace formula and counting fixed points of this morphism on $\Gamma(\bar{k}_F)$. See \cite[\S6]{hq} for an explicit example.

\end{remark}

\begin{remark}
For the background in the semistable case see
\cite[\S12.1-\S12.3, \S12.8]{SGA7I} when $k=\bar k$ and \cite[\S9.2]{BLR} 
or \cite{Pap} in general.
In the non-semistable case, the fact that the inertia group of $F/K$ acts on $A$ by geometric
automorphisms goes back to Serre--Tate \cite[Proof of Thm. 2]{ST},
and \cite[pp. 12--13]{CFKS} explains how to extend this to a semilinear action 
of the whole of $G_K$. We also note that in \cite[Thm. 2.1]{BW} the $I_K$-invariants of $T_l A$ ($A$ a Jacobian) are described  in terms of the quotient curve by the Serre--Tate action. 
\end{remark}

We now illustrate how one might use Theorem \ref{main} in two simple examples.

\begin{example}
\label{exa1}
Let $p>3$ be a prime. Fix a primitive 3rd root of unity $\zeta\in \bar \Q_p$ and $\pi=\sqrt[3]p$, and let $F=\Q_p(\zeta,\pi)$.
Consider the elliptic curve 
$$
  E/\Q_p \colon y^2=x^3+p^2 
$$ 
which has additive reduction 
over $\Q_p$. Over $F$, the substitution $u= \frac{x}{\pi^2}, z= \frac{y}{p}$ results in the equation 
\begin{equation} \label{curve example equation}
z^2=u^3+1,
\end{equation}
so that $E$ attains good reduction over $F$ with the special fibre of its minimal model the curve $\bar{E}/k_F$ given by (reducing modulo $p$) equation (\ref{curve example equation}).

The Galois group $G_{\Q_p}$ acts on $\bar{E}$ by semilinear morphisms, which by Theorem \ref{main} are given on $\bar{E}(\bar\F_p)$ by the ``lift-act-reduce'' procedure. Explicitly, we compute the action of $\sigma \in G_{\Q_p}$ on a 
 point $(u_0,z_0)\in\bar{E}(\bar\F_p)$, with lift $(\tilde u_0,\tilde z_0)$ to the model of $E$ with good reduction. On the original equation for $E$ this corresponds to the point $(\pi^2  \tilde u_0, p \tilde z_0)\in E(F)$. Acting on this point by $\sigma$, rewriting the result in the variables $u,z$, and then reducing to $\bar{E}$ results in the point $(\bar\zeta^{2\chi(\sigma)}\bar\sigma u_0,\bar\sigma z_0)\in \bar{E}(\bar\F_p)$ where $\bar\sigma$ is the induced action of $\sigma$ on the residue field and $\chi$ is defined by $\frac{\sigma(\pi)}{\pi}\equiv \bar\zeta^{\chi(\sigma)}\mod \pi$. In summary, the ``lift-act-reduce'' procedure is given by
$$\scalebox{0.9}{$
 (u_0,z_0) \!\to\! (\tilde u_0, \tilde z_0) \!\to\!  (\pi^2  \tilde u_0, p \tilde z_0) \!\to\! (\sigma (\pi^2\tilde u_0), p \sigma \tilde z_0) \!\to\!
(\tfrac{\sigma\pi^2}{\pi^2} \sigma \tilde u_0, \sigma\tilde z_0) \!\to\! (\bar\zeta^{2\chi(\sigma)}\bar\sigma u_0,\bar\sigma z_0).$}
$$
 In particular, $\sigma$ in the inertia group of $\mathbb{Q}_p$ acts as the geometric automorphism $(u,z)\mapsto (\bar\zeta^{2\chi(\sigma)}u,z)$ of $\bar{E}$.

By Theorem \ref{main}, $T_l(E)$ with the usual Galois action is isomorphic to $T_l(\bar{E})$ with the action induced by the semilinear automorphisms. In particular, we see that the action factors through $\Gal(F^{nr}/\Q_p)$. Moreover the inertia subgroup acts by elements of order 3
(as expected from the N\'eron--Ogg--Shafarevich criterion), and the usual actions of $G_{\Q_p(\pi)}$ on $T_l(E)$ and $T_l(\bar{E})$ agree under the reduction map.
\end{example}

\begin{example}
\label{exa2}
 As in Example \ref{exa1} let $p>3$, $\zeta\in\bar\Q_p$ a primitive 3rd root of unity, $\pi=\sqrt[3]p$, and $\chi$ defined by $\frac{\sigma(\pi)}{\pi}\equiv \bar\zeta^{\chi(\sigma)}\mod \pi$. 
Consider the hyperelliptic curve
$$
  C/\Q_p \colon y^2=\bigl((x\!-\!\pi)^2-p\bigr)\bigl((x\!-\!\zeta\pi)^2-p\bigr)\bigl((x\!-\!\zeta^2\pi)^2-p\bigr)
$$
(note that $C$ is indeed defined over $\mathbb{Q}_p$ since any element of $G_{\mathbb{Q}_p}$ just permutes the factors on the right hand side of the defining equation).
Over $F=\Q_p(\zeta,\pi)$ the substitution $x'= \frac{x}{\pi}, y'= \frac{y}{p}$ transforms it to
$$
  y^2=\bigl((x\!-\!1)^2-\pi\bigr)\bigl((x\!-\!\zeta)^2-\pi\bigr)\bigl((x\!-\!\zeta^2)^2-\pi\bigr).
$$
This is a semistable curve that reduces to 
$$
  \bar C\colon y^2 = (x-1)^2(x-\bar\zeta)^2(x-\bar\zeta^2)^2, 
$$
a union of rational curves $y=x^3-1$ and $-y=x^3-1$ meeting at 3 points $(1,0)$, $(\bar\zeta,0)$ and $(\bar\zeta^2,0)$.
The dual graph $\Upsilon$ of $\bar C$ is 

\begin{center}
\def\LoopW(#1){
  \path[draw,-,thick,color=blue!70] (#1) edge[out=155,in=90] ($(#1)-(1.3,0)$);
  \path[draw,-,thick,color=blue!70] (#1) edge[out=210,in=270] ($(#1)-(1.3,0)$);
}
\begin{tikzpicture}[scale=1]
  \SetVertexNormal[Shape=circle, FillColor=blue!50, LineColor=blue!50, LineWidth=0.8pt]
    \tikzset{VertexStyle/.append style = {inner sep=0.5pt,minimum size=0.5em,font = \tiny\bfseries}}
  \Vertex[x=1.50,y=0.000,L=\relax]{1};
  \Vertex[x=1.50,y=1.000,L=\relax]{2};
  \SetUpEdge[lw=0.8pt,color=blue!70]
  \path[draw,-,thick,color=blue!70] (1) edge[out=155,in=215] (2);
  \path[draw,-,thick,color=blue!70] (1) edge[out=35,in=-35] (2);
  \path[draw,-,thick,color=blue!70] (1) edge (2);
\end{tikzpicture}
\end{center}

\noindent
We compute analogously to Example \ref{exa1} that $\sigma\in G_{\Q_p}$ acts as the semilinear automorphism 
of $\bar{C}$
$$
 \phantom{hahahahahahaha} (x,y)\mapsto (\bar\zeta^{\chi(\sigma)}\bar \sigma  x, \bar \sigma y).
$$
On $\Upsilon$ the action fixes the vertices (the two components of $\bar C$), and permutes the edges 
through a natural action of $G=\Gal(F/\Q_p)\iso S_3$ when $p\equiv 2$ mod 3, and 
of $G=\Gal(F/\Q_p)\iso C_3$ when $p\equiv 1$ mod 3.
Thus $H_1(\Upsilon,\Z)$ is the sum-zero part of $\Z[S_3/C_2]$, respectively $\Z[C_3]$, as a $\Z G$-module.
By Theorem \ref{main}, the Tate module $T_l(\Jac C)$ is an extension of 
$H_1(\Upsilon,\Z) \tensor_\Z\Z_l$ by $H^1(\Upsilon,\Z) \tensor_\Z\Z_l(1)$.
Now choose a topological generator $\sigma$ of the tame inertia and a Frobenius element $\phi$ of $G_{\Q_p(\pi)}$.
There is a $\Q_l$-basis of the special representation $\Sp_2$ on which they act as
$$
  \sigma \mapsto \smallmatrix 1101, \qquad \phi \mapsto \smallmatrix 100p,
$$
see \cite[4.1.4]{Ta}. After tensoring with $H^1(\Upsilon,\Z)$, 
Corollary \ref{cor16} describes $\H(C_{\Kbar},\Q_l)$ explicitly as follows:
in some basis, $\sigma$ acts as $\Sigma$, and $\phi$ acts as $\Phi_1$ when $p\equiv 1\mod 3$ and
acts as $\Phi_2$ when $p\equiv 2\mod 3$, where
$$
\smaller[3]
\Phi_1=
\begin{pmatrix}
1&0&0&0\cr
0&1&0&0\cr
0&0&p&0\cr
0&0&0&p\cr
\end{pmatrix}\!,
\quad\>\>
\Phi_2=
\begin{pmatrix}
-1&1&0&0\cr
0&1&0&0\cr
0&0&-p&p\cr
0&0&0&p\cr
\end{pmatrix}\!,
\quad\>\>
\Sigma=
\begin{pmatrix}
0&-1&0&-1\cr
1&-1&1&-1\cr
0&0&0&-1\cr
0&0&1&-1\cr
\end{pmatrix}\!.
$$
\end{example}

\subsection*{Layout}
To prove Theorem \ref{main}, we review semilinear actions in 
\S\ref{semilinear actions section}, and prove a general theorem (\ref{geommain}) 
for models of schemes that are sufficiently `canonical' to admit a unique extension of 
automorphisms of the generic fibre; in particular, this applies to minimal regular models and stable models
of curves, and N\'eron models of abelian varieties (this again goes back to 
\cite[Proof of Thm. 2]{ST}). We then apply this result in \S\ref{curves and jacobians sect}
to obtain Theorem \ref{main}.

In fact, all our results 
are slightly more general, 
and apply to $K$ the fraction field of an arbitrary Henselian DVR with perfect residue field, and
not just for the Galois action but also the action of other (e.g. geometric) automorphisms.

For applications of the results of the paper to the arithmetic of curves we refer the reader to \cite[\S6]{hq} and \cite[\S10]{M2D2}.
In particular, for hyperelliptic curves $y^2=f(x)$ over local fields of odd residue characteristic, 
\cite{M2D2} describes the Galois representation of the curve in terms of the arithmetic 
of the roots of~$f$.

\subsection*{Acknowledgements}
We would like to thank the University of Warwick and Baskerville Hall 
where parts of this research were carried out. 
We would like to thank the anonymous referees for their thorough reading of the paper 
and several comments improving the exposition.
This research is supported by \hbox{EPSRC} grants EP/M016838/1 and EP/M016846/1 `Arithmetic of hyperelliptic curves'.
The second author is supported by a Royal Society University Research Fellowship.

\section{Semilinear actions} \label{semilinear actions section}

\begin{notation}
For schemes $X/S$ and $S'/S$ we denote by $X_{S'}/S'$ the base change $X\times_S S'$.  
 For a scheme $T/S$ we write  $X(T)=\textup{Hom}_S(T,X)$ for the $T$-points of $X$. For a ring $R$, by an abuse of notation we write $X(R)=X(\textup{Spec }R)$. For morphisms $X\stackrel{f}{\longrightarrow}Y\stackrel{g}{\longrightarrow}Z$ our convention is that the composition is denoted $g\circ f$.
\end{notation}

\subsection{Semilinear morphisms}

\begin{definition}
If $S$ is a scheme, $\alpha\in\Aut S$, and $X$ and $Y$ are $S$-schemes, 
a morphism $f: X\to Y$ is \emph{$\alpha$-linear} (or simply \textit{semilinear}) if the following diagram commutes:
$$
\begin{CD}
  X        @>f>>              Y       \\
    @VVV                              @VVV        \\
  S @>\alpha>>     S  \\  
\end{CD}
$$
\end{definition}

\begin{definition} \label{twisted scheme}
For a scheme $X/S$ and an automorphism $\alpha\in\Aut(S)$, write $X_\alpha$
for $X$ viewed as an $S$-scheme via $X\to S\overarrow{\alpha} S$.
\end{definition}

\begin{remark} \label{remtwist}
 An $\alpha$-linear morphism $X\to X$ is the same as an $S$-morphism $X_\alpha\to X$.
Note further that
\begin{itemize}
\item 
$X_{\alpha \beta}=(X_\beta)_\alpha$,
\item
an $S$-morphism $f: X\to X$ induces an $S$-morphism $\alpha(f): X_\alpha\to X_\alpha$, 
which is the same map as $f$ on the underlying schemes.
\end{itemize}
\end{remark}

\begin{remark} \label{rembasechange}
Equivalently,  $X_\alpha=X\times_{S,\alpha^{-1}}S$ viewed as an $S$-scheme via the second projection, where the notation indicates that we are using the morphism $\alpha^{-1}:S\rightarrow S$ to form the fibre product. More precisely, the first projection gives an isomorphism of $S$-schemes $X\times_{S,\alpha^{-1}}S\rightarrow X_\alpha$. 
\end{remark}

\begin{lemma}
\label{lemproj}
Let $X$, $Y$, $S'$ be $S$-schemes, $\alpha\in\Aut S$ and suppose we are given an $\alpha$-linear morphism
$f: X\to Y$ and an $\alpha$-linear automorphism $\alpha': S'\to S'$.
\begin{enumerate}
\item There is a unique $\alpha'$-linear
morphism $f\times_\alpha \alpha': X_{S'}\to Y_{S'}$ such that 
$\pi_Y\ring (f\times_\alpha \alpha') = f\ring \pi_X$, where $\pi_X$ and $\pi_Y$ are the projections $X_{S'}\rightarrow X$ and $Y_{S'}\rightarrow Y$ respectively. 
\item Given another $S$-scheme $Z$, $\beta\in \Aut S$, $g: Y\to Z$ a $\beta$-linear morphism and $\beta': S'\to S'$ a $\beta$-linear automorphism, we have 
$$
(g\times_{\beta} {\beta'}) \ring   (f\times_{\alpha} {\alpha'})= (g\ring f)\times_{\beta\ring \alpha} ({\beta'}\ring {\alpha'}).
$$
\end{enumerate}
\end{lemma}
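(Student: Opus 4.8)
The plan is to run everything through the universal property of the fibre products $X_{S'}=X\times_S S'$ and $Y_{S'}=Y\times_S S'$. Recall that $X_{S'}$ carries two projections: the map $\pi_X\colon X_{S'}\to X$ to the first factor, and the projection $q_X\colon X_{S'}\to S'$ to the second, which is the structure map of $X_{S'}$ as an $S'$-scheme; similarly $\pi_Y,q_Y$ for $Y_{S'}$. By the universal property of $Y\times_S S'$, a morphism $X_{S'}\to Y_{S'}$ is the same as a pair of maps $X_{S'}\to Y$ and $X_{S'}\to S'$ that agree after composing to $S$. The candidate for $f\times_\alpha\alpha'$ is the morphism whose two components are $f\circ\pi_X\colon X_{S'}\to Y$ and $\alpha'\circ q_X\colon X_{S'}\to S'$.

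For existence I would verify that this pair is compatible over $S$, that is, $s_Y\circ(f\circ\pi_X)=s_{S'}\circ(\alpha'\circ q_X)$, where $s_Y\colon Y\to S$ and $s_{S'}\colon S'\to S$ are the structure maps. This is exactly where both hypotheses enter: $\alpha$-linearity of $f$ rewrites the left side as $\alpha\circ s_X\circ\pi_X$, while $\alpha$-linearity of $\alpha'$ rewrites the right side as $\alpha\circ s_{S'}\circ q_X$, and these coincide because $s_X\circ\pi_X=s_{S'}\circ q_X$ is the structure map of $X_{S'}$ over $S$. The resulting morphism $h\colon X_{S'}\to Y_{S'}$ then satisfies $\pi_Y\circ h=f\circ\pi_X$ by construction, and reading off its second component gives $q_Y\circ h=\alpha'\circ q_X$, which is precisely $\alpha'$-linearity. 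For uniqueness, I would note that the two requirements imposed on $h$ --- the stated identity $\pi_Y\circ h=f\circ\pi_X$, and $\alpha'$-linearity, the latter forcing $q_Y\circ h=\alpha'\circ q_X$ --- pin down both components of $h$ into $Y\times_S S'$, so $h$ is unique by the universal property. This settles (1).

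For (2) I would argue purely from the characterisation in (1), with no fresh computation on points. First record the elementary fact, checked on structure maps, that the composite of an $\alpha$-linear and a $\beta$-linear morphism is $(\alpha\circ\beta)$-linear; hence $f\circ g$ is $(\alpha\circ\beta)$-linear and $\alpha'\circ\beta'$ is an $(\alpha\circ\beta)$-linear automorphism of $S'$, so part (1) applies to produce $(f\circ g)\times_{\alpha\circ\beta}(\alpha'\circ\beta')$ and to characterise it uniquely. It then suffices to check that $(f\times_\alpha\alpha')\circ(g\times_\beta\beta')$ meets the same two defining conditions. Its semilinearity is $(\alpha'\circ\beta')$-linearity, again by the composite-of-semilinear fact applied over $S'$; and for the first component I would chain the identities from (1), using $\pi_Z\circ(g\times_\beta\beta')=g\circ\pi_Y$ and then $\pi_Y\circ(f\times_\alpha\alpha')=f\circ\pi_X$, which collapses the composite so that its projection to $Z$ equals $f\circ g$ followed by $\pi_X$. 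By the uniqueness in (1) the two sides agree.

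The only genuinely delicate point is the compatibility computation in the existence step: it is precisely the requirement that $f$ and $\alpha'$ be semilinear for the \emph{same} automorphism $\alpha$ that allows the pair $(f\circ\pi_X,\alpha'\circ q_X)$ to descend to a morphism into the fibre product, so this is where the hypotheses are indispensable. Everything else is formal, a consequence of the universal property together with careful bookkeeping of the twists $\alpha,\alpha',\beta,\beta'$ and of composition order, which is the main thing to keep straight throughout part (2).
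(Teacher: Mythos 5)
Your proof is correct and follows essentially the same route as the paper: part (1) via the universal property of the fibre product $Y_{S'}$ applied to the pair $(f\circ\pi_X,\,\alpha'\circ q_X)$, and part (2) via the uniqueness statement from (1). You in fact spell out the one compatibility check over $S$ that the paper leaves implicit, which is exactly where both semilinearity hypotheses are used.
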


\begin{proof}
(1) By the universal property of the fibre product $Y_{S'}$ applied to 
 the morphisms $f\circ\pi_X: X_{S'}\to Y$ and $\alpha'\circ\pi_{S'}: X_{S'}\to S'$ 
there is a unique morphism $X_{S'}\to Y_{S'}$ with the required properties.

(2) Follows from the uniqueness of the morphisms afforded by (1).
\end{proof}

%
%
%

\subsection{Semilinear actions}

\begin{notation}
For a group $G$ acting on a scheme $X$, for each $\sigma\in G$ we write $\sigma_X$ (or just $\sigma$) for the associated automorphism of $X$. All actions considered are left actions.
\end{notation}

\begin{definition} \label{def:semilinearaction}
Let $G$ be a group and  $S$ a scheme on which $G$ acts.  We say that $G$ acts \emph{semilinearly} on an  $S$-scheme $X/S$ if $G$ acts on $X$ as a scheme, and if for each $\sigma \in G$ the automorphism $\sigma_X$ is $\sigma_S$-linear. 
\end{definition}

\begin{remark}
\label{cocycle}
Specifying a semilinear action of $G$ on $X/S$ is equivalent
to giving $S$-isomorphisms $c_\sigma: X_{\sigma}\to X$ for each $\sigma\in G$,
satisfying the cocycle condition
$
  c_{\sigma \tau}=c_\sigma \ring \sigma(c_\tau)
$
(cf. Remark \ref{remtwist}).
\end{remark}

\begin{definition}[Action on points] \label{defpointact}
Given a semilinear action of $G$ on $X/S$ and $T/S$, 
$G$ acts on $X(T)$ via
$$
  P \quad\longmapsto\quad \sigma_X\ring P\ring \sigma_T^{-1}.
$$
\end{definition}

\begin{definition}[Base change action]
Suppose $G$ acts semilinearly on $X/S$. Then given $S'/S$ and a semilinear action of $G$ on $S'$,  we get a semilinear
\emph{base change action} of $G$ on $X_{S'}/S'$ by setting, for $\sigma \in G$, 
$$
\sigma_{X_{S'}}=\sigma_X\times_{\sigma_S}\sigma_{S'}.
$$
\end{definition}

%

\begin{lemma}
\label{lemactpoints}
Suppose $G$ acts semilinearly on $X/S$ and $T/S$. 
\begin{enumerate}
\item 
If $G$ acts semilinearly on $Y/S$ and $f:X\rightarrow Y$ is $G$-equivariant, then so is the natural map $X(T)\rightarrow Y(T)$ given by $P\mapsto f\circ P$. 
\item 
If $G$ acts semilinearly on $T'/S$ and $f:T'\rightarrow T$ is $G$-equivariant, 
then so is the natural map $X(T)\to X(T')$ given by $P\mapsto P\circ f$.
\item 
If $G$ acts semilinearly on $S'/S$ then the natural map $X(T)\to X_{S'}(T_{S'})$ given by
$P\mapsto P\times_{\id}\id$
is equivariant for the action of $G$, where $G$ acts on $X_{S'}(T_{S'})$ via
base change.
\end{enumerate}
\end{lemma}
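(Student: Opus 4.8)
The plan is to verify each part by unwinding Definition \ref{defpointact}, which sends $P\in X(T)$ to $\sigma_X\circ P\circ\sigma_T^{-1}$, and then chasing the relevant commuting squares. As a preliminary sanity check that I would use throughout, note that this operation really does land back in $X(T)$: since $\sigma_T^{-1}$ is $\sigma_S^{-1}$-linear, $P$ is $S$-linear, and $\sigma_X$ is $\sigma_S$-linear, the composite is $(\sigma_S\circ\id_S\circ\sigma_S^{-1})$-linear, hence an $S$-morphism.

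For (1), $G$-equivariance of $f:X\to Y$ means $f\circ\sigma_X=\sigma_Y\circ f$ for every $\sigma$, so $f\circ(\sigma_X\circ P\circ\sigma_T^{-1})=(f\circ\sigma_X)\circ P\circ\sigma_T^{-1}=\sigma_Y\circ(f\circ P)\circ\sigma_T^{-1}$, which is exactly $\sigma$ applied to $f\circ P$. Part (2) is the mirror statement: $G$-equivariance of $f:T'\to T$ gives $\sigma_T\circ f=f\circ\sigma_{T'}$, hence $\sigma_T^{-1}\circ f=f\circ\sigma_{T'}^{-1}$, and then $(\sigma_X\circ P\circ\sigma_T^{-1})\circ f=\sigma_X\circ(P\circ f)\circ\sigma_{T'}^{-1}$, i.e.\ $\sigma\cdot(P\circ f)=(\sigma\cdot P)\circ f$. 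Each of these is a one-line verification once the definition of the action is substituted in.

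Part (3) is where the actual content lies, and I expect the bookkeeping there to be the only real obstacle. The source map sends $P$ to $P\times_{\id}\id$ (Lemma \ref{lemproj}(1) with $\alpha=\id$, $\alpha'=\id_{S'}$), while the target carries the base change action, so that $\sigma$ acts on a point $Q\in X_{S'}(T_{S'})$ by $(\sigma_X\times_{\sigma_S}\sigma_{S'})\circ Q\circ(\sigma_T\times_{\sigma_S}\sigma_{S'})^{-1}$. The key preliminary is to identify this inverse: by the composition rule Lemma \ref{lemproj}(2) together with the uniqueness in Lemma \ref{lemproj}(1), $(\sigma_T\times_{\sigma_S}\sigma_{S'})^{-1}=\sigma_T^{-1}\times_{\sigma_S^{-1}}\sigma_{S'}^{-1}$, since the two compose to $\id_T\times_{\id_S}\id_{S'}=\id_{T_{S'}}$. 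I would then apply Lemma \ref{lemproj}(2) twice to the triple composite $\sigma_{X_{S'}}\circ(P\times_{\id}\id)\circ\sigma_{T_{S'}}^{-1}$: the base-$S$ factors compose to $\sigma_S\circ\id_S\circ\sigma_S^{-1}=\id_S$, the base-$S'$ factors to $\sigma_{S'}\circ\id_{S'}\circ\sigma_{S'}^{-1}=\id_{S'}$, and the top maps to $\sigma_X\circ P\circ\sigma_T^{-1}$. Hence the composite equals $(\sigma_X\circ P\circ\sigma_T^{-1})\times_{\id}\id$, which is precisely the image of $\sigma\cdot P$ under the base change map, proving equivariance.

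The only genuine care needed is tracking the semilinearity indices through Lemma \ref{lemproj}(2)---in particular getting the inverse right, so that the resulting morphism is again of the form $(-)\times_{\id}\id$ and can be recognised as the image of $\sigma\cdot P$; beyond that bookkeeping, all three arguments are purely formal diagram chases.
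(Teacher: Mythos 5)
Your proposal is correct and follows essentially the same route as the paper: parts (1) and (2) are the one-line unwindings the paper dismisses as clear, and part (3) is exactly the paper's computation, applying Lemma \ref{lemproj}(2) to the triple composite $\sigma_{X_{S'}}\circ(P\times_{\id}\id)\circ\sigma_{T_{S'}}^{-1}$ so that the base factors cancel to the identity and the composite is recognised as $(\sigma_X\circ P\circ\sigma_T^{-1})\times_{\id}\id$. Your explicit justification that $(\sigma_T\times_{\sigma_S}\sigma_{S'})^{-1}=\sigma_T^{-1}\times_{\sigma_S^{-1}}\sigma_{S'}^{-1}$, via uniqueness in Lemma \ref{lemproj}(1), is a detail the paper uses implicitly; including it is a welcome tightening, not a different argument.
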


\begin{proof}
(1) Clear. (2). Clear. (3) 
Denoting by $\phi$ the map $X(T)\to X_{S'}(T_{S'})$ in the statement, for  each $\sigma\in G$ we have by Lemma \ref{lemproj} (2) that 
$$
  \sigma\cdot \phi(P) = (\sigma_X\times_{\sigma_S}\sigma_{S'}) \ring (P\times_{\id}\id) \ring (\sigma_{T}\times_{\sigma_S}\sigma_{S'})^{-1} =
  (\sigma_X\ring P\ring \sigma_T^{-1}) \times_{\id} \id = \phi(\sigma\cdot P)
$$
as desired.
\end{proof}

\begin{example}[Automorphisms]
Let $X$ be an $S$-scheme and $G=\Aut_S X$. Then the natural action of $G$ on $X$ is semilinear 
for the trivial action on~$S$. 
Given $T/S$ with trivial $G$-action, the induced action of $\sigma \in G$ on 
$X(T)$  recovers the usual action  $ P \mapsto \sigma\ring P$.
\end{example}

\begin{example}[Galois action] \label{galois example1}
Let $K$ be a field, $G=G_K$ and $S=\Spec K$ with trivial $G$ action. Let $T=\Spec\Kbar$ with $\sigma\in G$ acting via  
$$
(\sigma^{-1})^*: \Spec \Kbar\to \Spec \Kbar.
$$
Then for any scheme $X/K$, letting $G$ act trivially on $X$, the action on $X(\bar{K})$ is $
  P \mapsto P\ring \sigma^*
$,
which is just the usual Galois action on points. 

Now let $F/K$ be Galois, so that the $G$-action on $\textup{Spec }\bar{K}$ restricts to an action on  $\textup{Spec }F$. We obtain an example of a genuinely semilinear action  by considering the base change action of $G$ on $X_F$, so that here the action on the base $\textup{Spec }F$ is through $(\sigma^{-1})^*$. The natural map $X(\bar K)\to X_F(\bar K)$ is an equality, and identifies the $G$-actions by Lemma \ref{lemactpoints} (3).
\end{example}
\section{Geometric action over local fields} \label{geom act section}

Let $\mathcal{O}$ be a Henselian DVR, $K$ its field of fractions, $F/K$ a finite Galois extension, $\mathcal{O}_F$ the integral closure of $\mathcal{O}$ in $F$, and $k_F$ the residue field of $\mathcal{O}_F$. Let $G$ be a group equipped with a homomorphism  $\theta:G\rightarrow G_K$ (in our applications we will either take $G=G_K$ (and $\theta$ the identity map), or $\theta$ the zero-map).  This induces an action of $G$ on $\textup{Spec }\bar{K}$  via $\sigma \mapsto (\theta(\sigma)^{-1})^*$, which restricts to actions on $\textup{Spec }F$, $\textup{Spec }\mathcal{O}_F$, etc. 

Now let $X/F$ be a scheme on which $G$ acts semilinearly
with respect to the above action on $\Spec F$. Denote by $\mathcal{O}_F^{\textup{sh}}$ the strict Henselisation of $\mathcal{O}_F$, and $F^{\textup{sh}}$ the fraction field of  $\mathcal{O}_F^{\textup{sh}}$, noting that the map $\theta$ induces actions of $G$ on $\mathcal{O}_F^{\textup{sh}}$ and $F^{\textup{sh}}$.



\def\FB{F^{\nr}}  
\def\OFB{O_{F^{\nr}}}

\def\FB{\bar F} 
\def\OFB{\mathcal{O}_{\bar F}}

\begin{theorem}
\label{geommain}
Suppose $\cX/\mathcal{O}_F$ is a model\footnote{For our purposes, a \emph{model} $\cX/\mathcal{O}_F$ of $X$ is simply a scheme over $\mathcal{O}_F$ with a specified isomorphism
$i: \cX\times_{\mathcal{O}_F}F\overarrow{\iso} X$.} of $X$ such that for each $\sigma\in G$ the semilinear morphism $\sigma_{X}$ extends uniquely  to a semilinear morphism $\sigma_{\cX}: \cX\to \cX$. Then
\begin{enumerate}
\item 
The map $\sigma \mapsto \sigma_{\cX}$ defines a semilinear action of $G$ on $\cX/\mathcal{O}_F$. In particular, it induces by base-change a semilinear action of $G$ on the special fibre $\cX_{k_F}$, and also induces actions on $\cX(\mathcal{O}_F^{\textup{sh}})$ and $\cX_{k_F}(\bar k_F)$.
\item
The natural maps on points $\cX(\mathcal{O}_F^{\textup{sh}})\to X(F^\textup{sh})$ and $\cX(\mathcal{O}_F^{\textup{sh}})\to \cX_{k_F}(\bar k_F)$
are $G$-equivariant.
\item
Suppose $\cX(\mathcal{O}_F^{\textup{sh}})\to X(F^\textup{sh})$ is bijective,
and let $I$ be the image of $\cX(\mathcal{O}_F^{\textup{sh}})\to \cX_{k_F}(\bar k_F)$.
Then the action of  $\sigma\in G$ on $I$  is given~by
$$
  I \overarrow{\text{lift}} \cX(\mathcal{O}_F^{\textup{sh}})
    \overarrow{=} X(F^\textup{sh}) \overarrow{\sigma} X(F^\textup{sh}) \overarrow{=}
    \cX(\mathcal{O}_F^{\textup{sh}}) \overarrow{\text{reduce}} I.
$$
\end{enumerate}
\end{theorem}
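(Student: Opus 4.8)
The plan is to verify the three parts in order, leaning heavily on the uniqueness hypothesis and on the lemmas of \S\ref{semilinear actions section}. For part (1), I would first check that $\sigma\mapsto\sigma_{\cX}$ is a group action, i.e. that $(\sigma\tau)_{\cX}=\sigma_{\cX}\ring\tau_{\cX}$ and $\id_{\cX}=\id$. The key observation is that both sides are $(\sigma\tau)$-linear extensions of $\sigma_{X_F}\ring\tau_{X_F}=(\sigma\tau)_{X_F}$ to $\cX$: indeed, composing semilinear morphisms multiplies the base automorphisms (Lemma \ref{lemproj}(2) applied to the base, or directly from the definition), so $\sigma_{\cX}\ring\tau_{\cX}$ is an extension of $(\sigma\tau)_{X_F}$. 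Since the hypothesis asserts that such an extension is \emph{unique}, the two must coincide, and likewise $\id_{\cX}$ is forced to be the identity. This gives the semilinear action on $\cX/\mathcal{O}_F$; the induced base-change action on $\cX_{k_F}$ and the actions on the point sets $\cX(\OFB)$ and $\cX_{k_F}(\bar k_F)$ then come for free from Definition \ref{defpointact} and the base change action.

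For part (2), the two maps on points are each induced by a morphism of schemes: the reduction map $\cX(\OFB)\to\cX_{k_F}(\bar k_F)$ comes from precomposing with $\Spec\bar k_F\to\Spec\OFB$, and the generic-fibre map $\cX(\OFB)\to X(\FB)$ comes from precomposing with $\Spec\FB\to\Spec\OFB$ together with the identification $i$. To deduce equivariance, I would invoke Lemma \ref{lemactpoints}: part (2) of that lemma handles maps induced by a $G$-equivariant morphism on the test object $T$, so it suffices to check that the structure maps $\Spec\bar k_F\to\Spec\OFB$ and $\Spec\FB\to\Spec\OFB$ are $G$-equivariant for the actions coming from $\theta$. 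These are precisely the restrictions of the action on $\Spec\bar K$ to the residue field and to $\FB$, so equivariance is built into the setup. The generic-fibre map additionally uses that $i$ is compatible with the actions, which holds because the action on $\cX$ restricts by construction to $\sigma_{X_F}$ on the generic fibre.

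Part (3) is then essentially formal given (2). Under the bijectivity hypothesis, $\cX(\OFB)\overarrow{=}X(\FB)$ is a $G$-equivariant bijection, and the reduction map is $G$-equivariant with image $I$; so $G$ preserves $I$, and transporting the action of $\sigma$ along the chain $I\hookleftarrow\cX(\OFB)=X(\FB)$ produces exactly the displayed ``lift--act--reduce'' formula. The one point requiring a word of care is that the ``lift'' is only well-defined up to the fibres of the reduction map, so I would note that the formula is independent of the chosen lift: this follows because the reduction map is equivariant, so two lifts of the same element of $I$ map under $\sigma$ to two elements with the same reduction.

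The main obstacle is genuinely part (1), specifically making the uniqueness argument airtight: one must confirm that $\sigma_{\cX}\ring\tau_{\cX}$ is not merely \emph{some} extension of a semilinear morphism but an extension of $(\sigma\tau)_{X_F}$ in the precise sense demanded by the hypothesis (same base automorphism $\sigma\tau$ on $\mathcal{O}_F$, and restricting to $(\sigma\tau)_{X_F}$ under $i$ on the generic fibre). Once the bookkeeping of base automorphisms is pinned down via Lemma \ref{lemproj}, uniqueness does all the work and the rest is a routine application of Lemma \ref{lemactpoints}.
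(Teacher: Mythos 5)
Your proof is correct and is essentially the paper's own argument, fleshed out: part (1) by applying the uniqueness hypothesis to $\sigma_{\cX}\ring\tau_{\cX}$, which is $(\sigma\tau)_{\Spec \mathcal{O}_F}$-linear and restricts to $(\sigma\tau)_{X_F}$ on the generic fibre; part (2) via Lemma \ref{lemactpoints}; part (3) formally from (2). The only cosmetic difference is that you apply Lemma \ref{lemactpoints}(2) directly to the $G$-equivariant test-object maps $\Spec \bar F\to\Spec\mathcal{O}_{\bar F}$ and $\Spec \bar k_F\to\Spec\mathcal{O}_{\bar F}$ (plus the universal-property identifications of point sets), whereas the paper routes the same bookkeeping through the base-change statement, Lemma \ref{lemactpoints}(3), applied to $\mathcal{O}_{\bar F}\tensor_{\mathcal{O}_F}F=\bar F$ and $\mathcal{O}_{\bar F}\tensor_{\mathcal{O}_F}k_F\to \bar k_F$.
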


\begin{proof}
(1) Follows from uniqueness of the extension of the $\sigma_{X_F}$ to $\cX$.

 (2)
Follows from Lemma \ref{lemactpoints} (3) applied to the natural maps \[\mathcal{O}_F^{\textup{sh}}\tensor_{\mathcal{O}_F} F\rightarrow F^\textup{sh}~~\textup{ and }~~\OFB\tensor_{\mathcal{O}_F} k_F\to k_{\FB}.\]

(3) Follows from (2).
\end{proof}

\begin{remark}
The assumption on the uniqueness of the extensions of the $\sigma_{X}$ is automatic if $\cX/\mathcal{O}_F$ is separated.
The assumption that $\cX(\mathcal{O}_F^\textup{sh})\to X(F^\textup{sh})$ is bijective in part (3) is automatic if 
$\cX/\mathcal{O}_F$ is proper. 
\end{remark}



\begin{remark}\label{rmk-frob}
Suppose $\vchar k_F=p>0$  and $\sigma\in G$ acts on $\bar k_F$  as $x\mapsto x^{p^n}$ for some $n\geq 0$. Let $\Fr$ denote the $p^n$-power absolute Frobenius.
Note that $\Fr:\cX_{k_F}\to \cX_{k_F}$ is $\Fr=\sigma_{\textup{Spec }k_F}^{-1}$-linear whilst $\sigma_{\cX_{k_F}}$ is $\sigma_{\textup{Spec }k_F}$-linear, so that $\psi_\sigma=\sigma_{\cX_{k_F}}\circ \Fr$ is a 
$k_F$-morphism. Moreover,  since absolute Frobenius commutes with all scheme morphisms, for any $P\in \cX_{k_F}(\bar k_F)$  we have 
$$
\psi_\sigma(P) = 
\sigma_{\cX_{k_F}}\circ \Fr \circ P =  
\sigma_{\cX_{k_F}} \circ P \circ \Fr =  
\sigma_{\cX_{k_F}} \circ P \circ \sigma_{\textup{Spec }k_F}^{-1} = \sigma \cdot P.
$$
In particular,  the action of $\sigma$ on the $\bar k_F$-points of $\cX_{k_F}$ agrees with that of a $k_F$-morphism, even though the action of $\sigma$ on $k_F$ may be non-trivial.  
\end{remark}

\begin{remark} \label{assumptionshold} 
The assumptions of Theorem
\ref{geommain}, including (3), hold in the following situations:
\begin{itemize}
\item[(i)]
$X/F$ a curve of positive genus and $\cX/\mathcal{O}_F$ the minimal proper regular model of $X/F$.
\item[(ii)] 
$X/F$ a curve of positive genus, and $\cX/\mathcal{O}_F$ the stable model of $X/F$ (provided $X$ is semistable over $F$).
\item[(iii)]
$X/F$ an  abelian variety and $\cX/\mathcal{O}_F$ the N\'eron model of $X/F$.
\item[(iv)] $X/F$ a curve of positive genus  and $\cX/\mathcal{O}_F$ the N\'eron model of $X/F$ in the sense of \cite{LT}.
\end{itemize}
To see that the assumption of the theorem is satisfied, use Remark \ref{cocycle}:
in all three
cases, for any $\sigma \in G$, $\cX_\sigma$ is again a model of $X_\sigma$ of the same type as $\cX$, 
and the universal properties that these models satisfy guarantee the existence and 
uniqueness of the extensions. Regarding (3),
$\cX(\mathcal{O}_F^\textup{sh})=\cX_F(F^\textup{sh})$ by properness in (i),(ii) and the N\'eron mapping property in (iii) and (iv).
Moreover, we note that the image $I$ of the reduction map contains all non-singular points since $\mathcal{O}_F^\textup{sh}$ is Henselian. Note that in particular if we take $G=G_K$ (and $\theta$ the identity map), $C/K$ a curve of positive genus, and  $X=C_F$ equipped with the canonical semilinear action of $G_K$ as described in Example \ref{galois example1}, then   the assumptions of Theorem \ref{geommain} are satisfied for $\cX/\mathcal{O}_F$ any one of the minimal proper regular model, the stable model (assuming $C$ becomes semistable over $F$) or  N\'eron model of $X$. Similarly we may take $A/K$ to be an abelian variety, $X$ the base change of $A$ to $F$ equipped with its canonical semilinear action, and $\cX/\mathcal{O}_F$ the N\'eron model of $X/F$.
\end{remark}

\section{Curves and Jacobians} \label{curves and jacobians sect}

As in \S\ref{geom act section}, let $\mathcal{O}$ be a Henselian DVR, $K$ its field of fractions, $F/K$ a finite Galois extension, $\mathcal{O}_F$ the integral closure of $\mathcal{O}$ in $F$ and $k_F$ the residue field of $\mathcal{O}_F$. From now on we assume that $k_F$ is perfect. Denote by $\mathcal{O}_F^{\textup{sh}}$ the strict Henselisation of $\mathcal{O}_F$, and $F^{\textup{sh}}$ the fraction field of  $\mathcal{O}_F^{\textup{sh}}$. Let $G$ be a group equipped with a homomorphism  $\theta:G\rightarrow G_K$, acting on  $\textup{Spec }\bar{K}$  via $\sigma \mapsto (\theta(\sigma)^{-1})^*$, and hence also on $\textup{Spec }F$, $\textup{Spec }\mathcal{O}_F$, etc. Finally, fix a  curve $C/F$ of positive genus and semistable reduction equipped with a semilinear action of $G$ (with respect to the above action on $\textup{Spec} F$) and let $A/F$ be the Jacobian of $C$.  For the application to Theorem \ref{main} we take $G=G_K$ (and $\theta$ the identity map), begin with a curve over $K$ which becomes semistable over $F$, and take $C$ to be the base change of this curve to $F$ along with the canonical semilinear action of $G_K$ as in Example \ref{galois example1} and Remark \ref{assumptionshold}. 

Let $\cC/\mathcal{O}_F$ be the minimal regular model of $C/F$ (which is semistable since $C/F$ is). 
Let $\A/\mathcal{O}_F$ be the N\'eron model of $A/F$ with special fibre $\Abar/k_F$, and let
$\A^0/\mathcal{O}_F$ be its identity component with special fibre~$\Abar^0/k_F$.   Theorem \ref{geommain} and Remark \ref{assumptionshold} then provide a semilinear action of $G$ on $\cC/\mathcal{O}_F$, inducing a semilinear action on  the special fibre $\cC_{k_F}/k_F$ also.  Next,  let $ \Pic^0_{\cC/\mathcal{O}_F} $ denote the identity component of the relative Picard functor of $\cC$ over $\mathcal{O}_F$. This inherits a semilinear action $\sigma \mapsto  (\sigma_{\cC}^{-1})^*$ of $G$ induced from that on $\mathcal{C}/\mathcal{O}_F$ as we now explain (since pull back of line bundles is contravariant one needs to include the inverse to obtain a left action). By Remark \ref{rembasechange} and the fact that the relative Picard functor commutes with base change, it also commutes with twisting in the sense of Definition \ref{twisted scheme}: for all $\sigma \in G$ we have $
  \Pic^0_{\cC_\sigma/\mathcal{O}_F} =  (\Pic^0_{\cC/\mathcal{O}_F})_\sigma.
$
Functoriality of $\Pic^0_{\cC/\mathcal{O}_F}$ combined with Remark \ref{remtwist} gives the sought automorphism $(\sigma_{\cC}^{-1})^*$.  We note that this induces by base-change a semilinear action of $G$ on the special fibre $\Pic^0_{\cC_{k_F}/k_F}$, with $\sigma\in G$ acting as $(\sigma_{\cC_{k_F}}^{-1})^*$.  Further, the argument above with $C/F$ in place of $\mathcal{C}/\mathcal{O}_F$ yields a semilinear action of $G$ on the Jacobian $A/F$, again given by $\sigma \mapsto (\sigma_C^{-1})^*$ (if we take $G=G_K$ and $C$ arising via base change from $K$ then this is the usual Galois  action on the Jacobian on $C$). Now Theorem \ref{geommain} and Remark \ref{assumptionshold} apply once again to give a semilinear action of $G$ on  $\A/\mathcal{O}_F$ which induces semilinear actions on $\A^0/\mathcal{O}_F$, $\Abar/k_F$, and $\Abar^0/k_F$ also. We will need the following compatibility result between the above actions.
%

\begin{lemma} \label{compatibility of geom actions}
For any $\sigma \in G$, the following diagram commutes
$$
\begin{CD}
  \A^0            @>\sigma_{\A}>>       \A^0       \\ 
  @VV\iso V                      @VV\iso V   \\  
  \Pic^0_{\cC/\mathcal{O}_F}  @>(\sigma_{\cC}^{-1})^*>>      \Pic^0_{\cC/\mathcal{O}_F},   \\  
\end{CD}
$$
with the vertical isomorphisms provided by \cite[Thm. 9.5.4]{BLR}.
\end{lemma}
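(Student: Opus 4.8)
The plan is to reduce the commutativity of the square to an equality of two semilinear automorphisms of $\Pic^0_{\cC/\mathcal{O}_F}$ which I can check on the generic fibre and then propagate over $\mathcal{O}_F$ by separatedness. Write $\iota\colon \A^o\to\Pic^0_{\cC/\mathcal{O}_F}$ for the isomorphism of \cite[Thm. 9.5.4]{BLR}. The asserted commutativity is equivalent to the identity of maps $\Pic^0_{\cC/\mathcal{O}_F}\to\Pic^0_{\cC/\mathcal{O}_F}$
$$
  (\sigma_{\cC}^{-1})^* \;=\; \iota\ring \sigma_{\A}\ring \iota^{-1}.
$$
Since $\iota$ is an $\mathcal{O}_F$-isomorphism and $\sigma_{\A}$ is semilinear over the induced automorphism of $\Spec\mathcal{O}_F$, the conjugate $\iota\ring\sigma_{\A}\ring\iota^{-1}$ is again semilinear over the same automorphism of the base; so both sides are semilinear automorphisms of the single $\mathcal{O}_F$-scheme $\Pic^0_{\cC/\mathcal{O}_F}$ lying over one and the same automorphism of $\Spec\mathcal{O}_F$.

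Next I would compare the two maps on the generic fibre. Restricting to $F$, the isomorphism $\iota$ becomes the canonical identification $A_F=\Pic^0_{C_F/F}$ that defines the Jacobian (the Picard functor commuting with base change, and $(\A^o)_F=A_F$). Under this identification the restriction of $\sigma_{\A}$ is, by the very definition of the $G$-action on $A_F$, the map $(\sigma_{C_F}^{-1})^*$. On the other hand $\sigma_{\cC}$ restricts to $\sigma_{C_F}$ on the generic fibre, so $(\sigma_{\cC}^{-1})^*$ restricts to $(\sigma_{C_F}^{-1})^*$ as well. Hence the two semilinear automorphisms above agree after restriction to the generic fibre $\Pic^0_{C_F/F}$.

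Finally I would upgrade this generic-fibre agreement to an equality over $\mathcal{O}_F$. Via $\iota$, $\Pic^0_{\cC/\mathcal{O}_F}$ is identified with $\A^o$, the identity component of a N\'eron model, which is smooth (hence flat) and separated over $\mathcal{O}_F$. Flatness over the DVR $\mathcal{O}_F$ makes the generic fibre schematically dense, while separatedness makes the locus on which two morphisms into $\Pic^0_{\cC/\mathcal{O}_F}$ coincide closed; a semilinear morphism into such a model is therefore determined by its restriction to the generic fibre, which is exactly the uniqueness of semilinear extensions to separated models recorded after Theorem \ref{geommain}. Applying this to the two maps yields their equality, i.e.\ the commutativity of the diagram.

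The main obstacle, such as it is, lies entirely in the second step: unwinding the construction of the isomorphism $\iota$ to confirm that on the generic fibre it genuinely is the tautological identification $A_F=\Pic^0_{C_F/F}$ intertwining the two a priori distinct semilinear structures. Once this compatibility is pinned down, the passage from the generic fibre to the whole model is purely formal, being governed only by the flatness and separatedness of $\A^o$ over $\mathcal{O}_F$.
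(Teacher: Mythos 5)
Your proof is correct and follows essentially the same route as the paper's: the paper likewise reduces to the generic fibre using separatedness of $\A^o$ and $\Pic^0_{\cC/\mathcal{O}_F}$ over $\mathcal{O}_F$, where commutativity holds by the definition of the $G$-action on $A$. Your version merely spells out the details the paper leaves implicit (density of the generic fibre via flatness, and that both maps are semilinear over the same automorphism of the base).
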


\begin{proof}
Since $\A^0$ and $\Pic^0_{\cC/\mathcal{O}_F}$ are separated over $\mathcal{O}_F$ it suffices to check that the diagram commutes on the generic fibre, where it does by the definition of the action of $G$ on $A$.
\end{proof}

We now turn to the $G$-action on the Tate module of $A$. Here we write $[m]$ for the set of 
$m$-torsion points over the separable closure of the ground field.

\begin{lemma} \label{limit of ST cor}
\textup{(1)}
For every $m\ge 1$ coprime to $\vchar k_F$, 
\[A[m]^{I_F}\iso \Abar[m]\] as $G$-modules, where here $I_F:=\textup{Gal}(\bar{F}/F^\textup{sh})$ is the inertia group of $F$. 

\textup{(2)}
For every $l\ne\vchar k_F$, $$
    T_l A^{I_F} \iso T_l \Abar = T_l \Abar^0 \iso T_l \Pic^0_{\cC_{k_F}/k_F}
$$
as $G$-modules.
\end{lemma}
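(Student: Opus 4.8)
The plan is to establish (1) for each $m$ and then pass to the limit to obtain (2), with the $G$-equivariance tracked carefully throughout. For part (1), the starting point is the classical Serre--Tate / Grothendieck result that reduction induces an isomorphism $A[m]^{I_F}\iso\Abar[m]$ of abelian groups whenever $m$ is coprime to $\vchar k_F$; this comes from the N\'eron mapping property together with the fact that the prime-to-$p$ torsion of the N\'eron model over the strictly Henselian ring is \'etale, so that $\A[m](\OFB)\to\Abar[m](\bar k_F)$ is bijective and $\A[m](\OFB)=A[m]^{I_F}$ under the identification $\cX(\OFB)=X(\FB)$. What I need beyond the classical statement is that this isomorphism is one of $G$-modules. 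For this I would invoke Theorem \ref{geommain}(2) applied to $\cX=\A$ (case (iii) of Remark \ref{assumptionshold}): the reduction map $\A(\OFB)\to\Abar(\bar k_F)$ is $G$-equivariant for the semilinear actions constructed in this section, and it restricts to a $G$-equivariant map on $m$-torsion because the group scheme structure maps (and hence multiplication-by-$m$) are $G$-equivariant, being defined over $\mathcal{O}_F$. Thus the abstract isomorphism is automatically $G$-linear.

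For part (2), the first isomorphism $T_l A^{I_F}\iso T_l\Abar$ follows by taking the inverse limit of the isomorphisms $A[l^n]^{I_F}\iso\Abar[l^n]$ from part (1): since inverse limit is left exact and commutes with taking invariants, $T_l A^{I_F}=\varprojlim A[l^n]^{I_F}$, and the transition maps are compatible, giving a $G$-module isomorphism $T_l A^{I_F}\iso\varprojlim\Abar[l^n]=T_l\Abar$. The equality $T_l\Abar=T_l\Abar^o$ holds because $\Abar/\Abar^o$ is the (finite) component group, so the two group schemes have the same $l$-divisible torsion; this identification is $G$-equivariant since $\Abar^o$ is preserved by the $G$-action (the identity component is canonical, hence stable under the semilinear automorphisms induced on $\Abar$).

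The final isomorphism $T_l\Abar^o\iso T_l\Pic^0_{\cC_{k_F}/k_F}$ is where I would use Lemma \ref{compatibility of geom actions}: that lemma gives a $G$-equivariant isomorphism $\A^o\iso\Pic^0_{\cC/\mathcal{O}_F}$ over $\mathcal{O}_F$, which base-changes to a $G$-equivariant isomorphism of special fibres $\Abar^o\iso\Pic^0_{\cC_{k_F}/k_F}$, and hence induces a $G$-module isomorphism on $l$-adic Tate modules. I expect the main obstacle to be purely bookkeeping rather than conceptual: namely, verifying at each stage that the semilinear $G$-action is the correct one and that the various identifications ($\cX(\OFB)=X(\FB)$, invariants commuting with the limit, compatibility of the multiplication-by-$m$ maps with the action) are all genuinely $G$-equivariant, so that the well-known isomorphisms of groups upgrade to isomorphisms of $G$-modules. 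The geometric content is already packaged in Theorem \ref{geommain} and Lemma \ref{compatibility of geom actions}; the work here is to thread the equivariance through the classical Serre--Tate argument.
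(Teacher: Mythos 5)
Your proposal is correct and follows essentially the same route as the paper: the classical Serre--Tate result (\cite[Lemma 2]{ST}) gives the isomorphism of abelian groups, Theorem \ref{geommain}(2) upgrades it to a $G$-module isomorphism, and part (2) follows by passing to the limit and invoking Lemma \ref{compatibility of geom actions} for the identification with $T_l \Pic^0_{\cC_{k_F}/k_F}$. The extra details you supply (stability of $\Abar^o$ under the semilinear action, finiteness of the component group) are exactly the bookkeeping the paper leaves implicit.
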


\begin{proof}
(1) Note that $A[m]^{I_F}=A(F^{\textup{sh}})[m]$ is a $G$-submodule of $A[m]$ since $G$ acts on  $F^{\textup{sh}}$. By \cite[Lemma 2]{ST}, under the reduction map $A[m]^{I_F}$ is isomorphic to $\Abar[m]$ as abelian groups, and this map is $G$-equivariant for the given actions by
Theorem \ref{geommain} (2). 

(2) Pass to the limit in (1) and apply Lemma \ref{compatibility of geom actions} for the final isomorphism.
\end{proof}

The following theorem describes the $G$-module $T_l  \Pic^0_{\cC_{k_F}/k_F}$. We begin by explaining how $G$ acts on certain objects associated to $\cC_{k_F}$. 

\begin{remark} \label{rem:graphact}
Let $Y=\cC_{\bar k_F}$. Combining the action of $G$ on $\mathcal{C}_{k_F}$ with the action on $\bar{k}_F$ coming from the homomorphism $\theta:G\rightarrow G_K$ we obtain by base-change a semilinear action of $G$ on $Y$. This moreover induces a semilinear action on the  normalisation $\tilde Y$ of $Y$ (any automorphism of $Y$, semilinear or otherwise, lifts uniquely to $\tilde{Y}$ and the lifts of the $\sigma_Y$ are easily checked to define a semilinear action of $G$). Write 
\begingroup
\def\Cbar{Y}
\def\Cbarn{{\tilde Y}}
\def\Ish{{\mathbb I}}
\def\nmap{n}
\def\Yb{Y_{\kbar}}
\def\Ybn{\tilde Y_{\kbar}}

\begin{tabular}{llllll}
$\nmap$      &=& normalisation map $\Cbarn\to\Cbar$,\cr
$\II$        &=& set of singular (ordinary double) points of $\Cbar$,\cr
$\JJ$        &=& set of connected components of $\Cbarn$,\cr
$\KK$        &=& $\nmap^{-1}(\II)$; this comes with two canonical maps\cr
&& $\phi: \KK\to \II$, $P\mapsto\nmap(P),$\cr
&& $\psi: \KK\to \JJ$, $P\mapsto$ component of $\Cbarn$ on which $P$ lies.\cr
\end{tabular}

\noindent The dual graph $\Upsilon$ of $Y$ has vertex set $\JJ$ and edge set $\II$. $\KK$ is the set of edge endpoints, and the maps $\phi$ and $\psi$ specify adjacency (note that loops and multiple edges are allowed).  A graph automorphism of $\Upsilon$ (which we allow to permute multiple edges and swap edge endpoints) is precisely
the data of bijections $\KK\to\KK$, $\II\to\II$ and $\JJ\to\JJ$ that commute with $\phi$ and $\psi$. In this way,  the action of $G$ on $\tilde{Y}$ induces an action of $G$ on $\Upsilon$, and hence also on $H_1(\Upsilon,\Z)$ and $H^1(\Upsilon,\Z).$
\endgroup
\end{remark}

\begin{theorem} \label{dual graph theorem}
We have an exact sequence of $G$-modules
$$
  0 \lar H^1(\Upsilon,\Z) \tensor_\Z\Z_l(1) \lar T_l  \Pic^0_{\cC_{k_F}/k_F} \lar T_l  \Pic^0_{\tilde \cC_{k_F}/k_F} \lar 0
$$
where $\Upsilon$ is the dual graph of $\cC_{\bar k_F}$ and $\tilde \cC_{k_F}$ the normalisation of $\cC_{k_F}$.
Moreover,
$$
T_l  \Pic^0_{\tilde \cC_{k_F}/k_F}\iso \bigoplus_{\Gamma\in \JJ/G} \Ind_{\Stab(\Gamma)}^{G} T_l\Pic^0(\Gamma)
$$
where $\JJ$ is the set of geometric connected components of $\tilde\cC_{\bar k_F}$. 
(The action of $G$ on $\Z_l(1)$ is via the map $\theta:G\rightarrow G_K$.)
\end{theorem}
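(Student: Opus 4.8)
The plan is to derive both statements from the classical structure theory of the Picard scheme of a nodal curve, promoting every map to a $G$-equivariant one. Throughout I write $Y=\cC_{\bar k_F}$ and let $n:\tilde Y\to Y$ be its normalisation, so that $\Pic^0_{\cC_{k_F}/k_F}\times_{k_F}\bar k_F=\Pic^0_{Y/\bar k_F}$ and likewise for $\tilde Y$; since Tate modules are computed on $\bar k_F$-points and the semilinear $G$-action naturally involves the action on $\bar k_F$, it suffices to work over $\bar k_F$.

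First I would recall the standard exact sequence of group schemes
\[
  0\to T\to \Pic^0_{Y/\bar k_F}\xrightarrow{\,n^*\,}\Pic^0_{\tilde Y/\bar k_F}\to 0,
\]
obtained from $0\to\mathcal O_Y^*\to n_*\mathcal O_{\tilde Y}^*\to\mathcal S\to 0$ with $\mathcal S$ supported on the nodes (see \cite[\S9.2]{BLR}). Here $T$ is a torus with an explicit combinatorial description: it is the cokernel of the incidence map $\prod_{\JJ}\mathbb{G}_m\to\prod_{\II}\mathbb{G}_m$ determined by $\phi$ and $\psi$, so that its cocharacter lattice is canonically $\operatorname{coker}(\Z^{\JJ}\to\Z^{\II})=H^1(\Upsilon,\Z)$. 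Applying $T_l(-)$, which is exact on tori and abelian varieties since $l\ne\vchar k_F$, yields the short exact sequence of the theorem, with $T_lT=X_*(T)\tensor\Z_l(1)=H^1(\Upsilon,\Z)\tensor\Z_l(1)$ via $T_l\mathbb{G}_m=\Z_l(1)$.

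The substance is equivariance. The normalisation map $n$ is $G$-equivariant by Remark \ref{rem:graphact}, hence so is the pullback $n^*$ for the semilinear $(\sigma^{-1})^*$-actions; therefore $T=\ker n^*$ inherits a compatible semilinear action and the whole sequence is $G$-equivariant. The point to verify is that the identification $X_*(T)\iso H^1(\Upsilon,\Z)$ is $G$-equivariant: since $T$, together with its cocharacter lattice, is manufactured functorially from the combinatorial datum $(\II,\JJ,\KK,\phi,\psi)$, and the induced $G$-action on these sets is by definition the graph action of Remark \ref{rem:graphact}, this is a matter of tracing functoriality through the construction rather than any computation. For the Tate twist one checks that the semilinear action on each factor $\mathbb{G}_m$ induces on $T_l\mathbb{G}_m=\Z_l(1)$ the cyclotomic action via $\theta$; concretely, the reduction isomorphism $\mu_{l^n}(\bar K)\iso\mu_{l^n}(\bar k_F)$ on roots of unity of order prime to the residue characteristic identifies this with the usual $\Z_l(1)$ acted on through $\theta$.

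For the second isomorphism, over $\bar k_F$ one has $\Pic^0_{\tilde Y/\bar k_F}=\prod_{\Gamma\in\JJ}\Pic^0(\Gamma)$, with $G$ permuting the factors according to its action on $\JJ$ and acting semilinearly within each; passing to Tate modules gives $T_l\Pic^0_{\tilde\cC_{k_F}/k_F}=\bigoplus_{\Gamma\in\JJ}T_l\Pic^0(\Gamma)$. Collecting the summands into $G$-orbits and invoking the standard identification of the permutation-induced module $\bigoplus_{\Gamma'\in G\cdot\Gamma}T_l\Pic^0(\Gamma')$ with $\Ind_{\Stab(\Gamma)}^{G}T_l\Pic^0(\Gamma)$, where $\Stab(\Gamma)$ acts semilinearly on $T_l\Pic^0(\Gamma)$, produces the claimed decomposition. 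I expect the main obstacle to be the equivariance of the cocharacter-lattice identification: the classical structure and the orbit bookkeeping are routine, but making the interaction between the semilinear action on $\bar k_F$ and the combinatorial torus transparent, and confirming that the twist acquires the $\theta$-action rather than a trivial one, requires a genuinely functorial, coordinate-free construction of $T$.
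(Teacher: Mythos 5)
Your overall route is the paper's own: both arguments start from the sheaf sequence $1\to\mathcal{O}_Y^\times\to n_*\mathcal{O}_{\tilde Y}^\times\to\mathcal{S}\to 0$ for the normalisation, identify the kernel of $T_l\Pic^0_{\cC_{k_F}/k_F}\to T_l\Pic^0_{\tilde\cC_{k_F}/k_F}$ combinatorially in terms of the dual graph, and obtain the second statement by sorting the components of $\tilde\cC_{\bar k_F}$ into $G$-orbits. The classical part, the equivariance of $n^*$, the orbit bookkeeping, and the identification of the twist via $\mu_{l^n}(\bar K)\iso\mu_{l^n}(\bar k_F)$ are all fine. The problem is the step you yourself flag, and it is a genuine gap rather than a matter of ``tracing functoriality'': your incidence map $\prod_{\JJ}\mathbb{G}_m\to\prod_{\II}\mathbb{G}_m$ is not determined by $\phi$ and $\psi$ alone. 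To write down its component at a node $i$ you must order the two branches in $\phi^{-1}(i)\subset\KK$, i.e.\ orient the edge $i$ (only then is $(a_j)_j\mapsto a_{\psi(k_1)}/a_{\psi(k_2)}$ defined), and the identification of $\coker(\Z^{\JJ}\to\Z^{\II})$ with $H^1(\Upsilon,\Z)$ uses the same orientations. The group $G$ need not preserve these orientations, and when it reverses an edge the correct action on the corresponding $\mathbb{G}_m$-factor and $\Z$-coordinate involves inversion (a sign), not just permutation of factors; so the ``canonical'' equivariant identification you assert is precisely what has to be proved, and with $\II$-indexed permutation objects it is false.

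The paper's own second example makes this concrete: for $py^2=x^3+x^2+p$ the dual graph is one vertex with a loop, so $\II$ and $\JJ$ are singletons and any $\II$-, $\JJ$-indexed construction carrying the permutation action would force $G$ to act on the lattice $\Z$ trivially (up to the cyclotomic twist); but the true action on $H^1(\Upsilon,\Z)$ is by the nontrivial character $\sigma\mapsto\epsilon_\sigma$, detected only because $\sigma$ with $\epsilon_\sigma=-1$ swaps the two \emph{branches} over the node. This is exactly why the paper's proof replaces edge-indexed objects by branch-indexed ones: its sequences \eqref{PicSeq} and \eqref{mayer} use $(\kbar^\times)^{\KK}/\phi^*((\kbar^\times)^{\II})$ and $\Z^{\KK}\to\Z^{\II}\times\Z^{\JJ}$, which are genuine permutation objects for the action of Remark \ref{rem:graphact}, so that comparing the two resulting four-term sequences of Tate modules is sign-free. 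Since the non-equivariant statement is classical (SGA7 IX \S12, BLR \S9.2), the $G$-equivariance \emph{is} the content of the theorem; as written, your proposal defers exactly this point, and it becomes a proof once you rebuild $T$ and $H^1(\Upsilon,\Z)$ from $\KK$ (branches) rather than from oriented edges, as the paper does.
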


\begin{proof}
\begingroup
\def\Cbar{Y}
\def\Cbarn{{\tilde Y}}
\def\Ish{{\mathbb I}}
\def\nmap{n}
\def\Yb{Y_{\kbar}}
\def\Ybn{\tilde Y_{\kbar}}

We follow \cite[pp. 469--474]{SGA7I} closely, except our sequences (\ref{PicSeq}) and (\ref{mayer}) are slightly tweaked from the ones appearing there, and we must check $G$-equivariance of all maps appearing. Write $k=k_F$, $Y=\cC_{\bar k_F}$ and let $\tilde{Y}$, $n$, $\mathcal{I}$, $\mathcal{J}$, $\mathcal{K}$,  $\phi$, $\psi$ be as in Remark \ref{rem:graphact}. 
%
%
%
The normalisation map $\nmap$ is an isomorphism outside $\II$,
and yields an exact sequence of sheaves on $\Cbar$
$$
  1 \lar \mathcal{O}_{\Cbar}^\times \lar \nmap_* \mathcal{O}_{\Cbarn}^\times \lar \Ish \lar 0,
$$
with $\Ish$ concentrated in $\II$. Consider the long exact sequence on cohomology
$$
  0 \to H^0(\Cbar,\mathcal{O}_{\Cbar}^\times) \to H^0(\Cbarn,\mathcal{O}_{\Cbarn}^\times) \to H^0(\Cbar,\Ish) \to 
    H^1(\Cbar,\mathcal{O}_{\Cbar}^\times) \to H^1(\Cbarn,\mathcal{O}_{\Cbarn}^\times) \to 0
$$
which is surjective on the right since $\Ish$ is flasque.
Writing $(\kbar^\times)^\II$ for the set of functions $\II\to\kbar^\times$, and similarly
for $\JJ$ and $\KK$, we have
$$
  H^0(\Cbar,\Ish) = \coker((\kbar^\times)^\II\overarrow{\phi^*}(\kbar^\times)^\KK),
$$
where $\phi^*$ takes a function $\II\to \kbar^\times$ to $\KK\to \kbar^\times$ by 
composing with $\phi$. With $\psi^*$ defined in the same way, 
the exact sequence above becomes
\begin{equation}\label{PicSeq}
  0 \lar \kbar^\times \lar (\kbar^\times)^\JJ \overarrow{\psi^*} 
    \frac{(\kbar^\times)^\KK}{\phi^*((\kbar^\times)^\II)} \lar \Pic \Cbar(\kbar) 
    \lar \Pic \Cbarn (\kbar) \lar 0. 
\end{equation}
Write the dual graph $\Upsilon$ as the union
$\Upsilon=U\cup V$, where $U$ 
is the union of open edges, and $V$ is the union of small open neighbourhoods of
the vertices. Then the Mayer-Vietoris sequence reads
\begin{equation}\label{mayer} 
  0 \lar H_1(\Upsilon,\Z) \lar \Z^\KK \overarrow{(\phi,\psi)} \Z^\II\times\Z^\JJ \lar \Z \lar 0,
\end{equation}
since $H_0(U)=\Z^\II$, $H_0(V)=\Z^\JJ$, $H_0(U\cap V)=\Z^\KK$ 
and the higher homology groups of $U$, $V$ and $U\cap V$ all vanish. 
%

Now take $\sigma\in G$. Since the semilinear action of $G$ on $\tilde{Y}$ lifts that on $Y$, the natural maps $\mathcal{O}_\Cbar\to (\sigma_Y)_* \mathcal{O}_\Cbar$ and $\mathcal{O}_\Cbarn\to (\sigma_{\tilde{Y}})_* \mathcal{O}_\Cbarn$
 give the left two vertical maps in the commutative diagram 
$$
\begin{CD}
  0  @>>>     \mathcal{O}_\Cbar^\times    @>>>   n_* \mathcal{O}_\Cbarn^\times   @>>>   \Ish  @>>>   0   \\
    @.        @VVV         @VVV        @VVV           \\
  0  @>>>     (\sigma_Y)_* \mathcal{O}_\Cbar^\times       @>>>   (\sigma_Y)_* n_* \mathcal{O}_\Cbarn^\times      @>>>   (\sigma_Y)_*\Ish     @>>>   0,  \\
\end{CD}
$$
 with these two vertical maps then giving rise to the third.
Taking the long exact sequences for cohomology associated to this diagram 
 we find  that  \eqref{PicSeq} is an exact sequence of 
$G$-modules  (note that as $\sigma_Y$ is an isomorphism,  for any sheaf $\cF$ on $Y$ the natural pullback map on cohomology identifies $H^i(Y,(\sigma_Y)_*\cF)$ with $H^i(Y,\cF)$ for all $i$).

On the level of Tate modules $T_l$ ($l\ne\vchar k$), \eqref{PicSeq} then yields an exact sequence of $G$-modules\footnote{For $l\ne\vchar k$ the first three terms of (\ref{PicSeq}) are $l$-divisible, from which it follows that the sequence is also exact on the level of $l^n$-torsion for each $n\geq 1$. Moreover, since for all $n$ the $l^n$-torsion in each term is a finite abelian group, the resulting inverse systems all satisfy the Mittag-Leffler conditions. In particular, the sequence of $l$-adic Tate modules is exact also.}
$$
  0 \!\lar\! \Z_l(1) \!\lar\! \Z_l[\II](1)\oplus \Z_l[\JJ](1) \!\lar\! \Z_l[\KK](1) \!\lar\! 
  T_l \Pic \Cbar \!\lar\! T_l \Pic \Cbarn \!\lar\! 0
$$
with $G$ acting on $\Z_l(1)$ via the map $\theta:G\rightarrow G_K$ and on $\mathcal{I}, \mathcal{J}$ and $\mathcal{K}$ by permutation.
On the other hand, applying $\Hom(-,\Z_l(1))$ to \eqref{mayer} yields an exact sequence of $G$-modules
$$
  0 \lar \Z_l(1) \lar \Z_l[\II](1)\oplus \Z_l[\JJ](1) \lar \Z_l[\KK](1) 
    \lar H^1(\Upsilon,\Z)\tensor_\Z\Z_l(1) \lar 0.
$$
The first claim follows.

For the second claim, note that $T_l\Pic^0\Cbarn=\bigoplus_{\Gamma\in\JJ}T_l\Pic^0\Gamma$ abstractly, and that once the $G$-action is accounted for the right hand side becomes the asserted direct sum of induced modules.
\endgroup
\end{proof}

\begin{remark} \label{rem:toric part}
Under the Serre--Tate isomorphism $T_l  \Pic^0_{\cC_{k_F}/k_F} \iso T_l(A)^{I_F}$, the subspace $H^1(\Upsilon,\Z) \tensor_\Z\Z_l(1)$ maps onto $T_l(A)^t$.
 To see this, let $\cF$ be the image of $H^1(\Upsilon,\Z) \tensor_\Z\Z_l(1)$ in $T_l(A)$. In the notation of Theorem \ref{dual graph theorem}, the quotient of $T_l(A)^{I_F}$ by $\mathcal{F}$ is isomorphic to $\bigoplus_{\Gamma\in \JJ/G} \Ind_{\Stab(\Gamma)}^{G} T_l\Pic^0(\Gamma)$ and as such is free as a $\mathbb{Z}_l$-module. In particular $\mathcal{F}$ is a saturated submodule of $T_l(A)^{I_F}$. Similarly, $T_l(A)^t$ is also a saturated submodule of $T_l(A)^{I_F}$ by \eqref{eq1}. Since also $\mathcal{F}$ and $T_l(A)^t$ have the same $\Z_l$-rank (again by \eqref{eq1} and Theorem \ref{dual graph theorem}), to show that they are equal as submodules of $T_l(A)^{I_F}$ it is enough to check $\cF\subseteq T_l(A)^t$.
When $K$ is a local field (as is the case in Theorem \ref{main}) the eigenvalues of the  Frobenius element of $\textup{Gal}(F^{\textup{nr}}/F)$ on $\cF$ have absolute value $|k_F|$, since it acts on $H^1(\Upsilon,\mathbb{Z})$ with finite order, and on $\mathbb{Z}_l(1)$ as multiplication by $|k_F|$ (here $F^{\textup{nr}}$ denotes the maximal unramified extension of $F$).  By examining the  graded pieces of the filtration in \eqref{eq1} we see that $T_l(A)^t$ can be characterised as the largest submodule of $T_l(A)^{I_F}$ on which Frobenius acts with all eigenvalues having weight  $|k_F|$ and so the claim follows.
For general $K$ one can use Deligne's Frobenius weights argument in  \cite[I,\,\S6]{SGA7I} to reduce to this case.
\end{remark}

\begin{corollary} 
\label{corGsplit}
The canonical filtration $0\subset T_l(A)^t \subset T_l(A)^{I_F} \subset T_l(A)$ in \eqref{eq1} is $G$-stable and its graded pieces are, as $G$-modules,
$$
  H^1(\Upsilon,\Z) \tensor_\Z\Z_l(1), \qquad T_l \Pic^0 (\tilde \cC_{\bar k_F}), 
    \qquad  H_1(\Upsilon,\Z) \tensor_\Z\Z_l.
$$

\end{corollary}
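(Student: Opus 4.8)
The plan is to read off all three graded pieces from the $G$-equivariant identifications already in place, using the middle term $T_l(A)^{I_F}$ as the hinge and invoking autoduality of the Jacobian only for the top piece. First I would dispose of $G$-stability of the filtration. By the proof of Lemma \ref{limit of ST cor}, $T_l(A)^{I_F}=T_l\bigl(A(F^{\textup{nr}})\bigr)$ and $G$ acts on $F^{\textup{nr}}$ through $\theta$, so $T_l(A)^{I_F}$ is a $G$-submodule of $T_l(A)$. For the toric part, Remark \ref{rem:toric part} exhibits $T_l(A)^t$ as the image of the $G$-submodule $H^1(\Upsilon,\Z)\tensor_\Z\Z_l(1)\subseteq T_l\Pic^0_{\cC_{k_F}/k_F}$ (a $G$-submodule by Theorem \ref{dual graph theorem}) under the $G$-equivariant Serre--Tate isomorphism of Lemma \ref{limit of ST cor}; hence $T_l(A)^t$ is $G$-stable too, and the whole filtration \eqref{eq1} is $G$-stable.

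Next I would identify the bottom two graded pieces with no new work. The Serre--Tate isomorphism $T_l(A)^{I_F}\iso T_l\Pic^0_{\cC_{k_F}/k_F}$ is one of $G$-modules and is injective, and under it $T_l(A)^t$ corresponds to $H^1(\Upsilon,\Z)\tensor_\Z\Z_l(1)$ by Remark \ref{rem:toric part}. Thus the bottom graded piece is $T_l(A)^t\iso H^1(\Upsilon,\Z)\tensor_\Z\Z_l(1)$ directly, and quotienting gives the middle graded piece $T_l(A)^{I_F}/T_l(A)^t\iso T_l\Pic^0_{\cC_{k_F}/k_F}/\bigl(H^1(\Upsilon,\Z)\tensor_\Z\Z_l(1)\bigr)$, which the exact sequence of Theorem \ref{dual graph theorem} identifies with $T_l\Pic^0(\tilde\cC_{k_F})$ as a $G$-module.

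For the top graded piece $T_l(A)/T_l(A)^{I_F}$ I would use the autoduality of the Jacobian. The Weil pairing is a perfect pairing $T_l(A)\times T_l(A)\to\Z_l(1)$, with $G$ acting on $\Z_l(1)$ through $\theta$, and Grothendieck's orthogonality relations \cite[\S12]{SGA7I} give $(T_l(A)^{I_F})^\perp=T_l(A)^t$. This induces a perfect $G$-equivariant pairing $T_l(A)^t\times\bigl(T_l(A)/T_l(A)^{I_F}\bigr)\to\Z_l(1)$, hence an isomorphism $T_l(A)/T_l(A)^{I_F}\iso\Hom(T_l(A)^t,\Z_l(1))$. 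Substituting $T_l(A)^t\iso H^1(\Upsilon,\Z)\tensor_\Z\Z_l(1)$ from the previous paragraph and cancelling the twist yields $\Hom(H^1(\Upsilon,\Z),\Z_l)\iso H_1(\Upsilon,\Z)\tensor_\Z\Z_l$, where the $G$-action on the right is the contragredient of the graph action on $H^1(\Upsilon,\Z)$ from Remark \ref{rem:graphact}; for the graph $\Upsilon$ this is precisely the natural action on $H_1(\Upsilon,\Z)$, completing the three identifications.

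The step I expect to be the main obstacle is the $G$-equivariance of the Weil pairing, since the $G$-action on $T_l(A)$ is defined only through the semilinear action on $A_F$ and not through an honest $K$-rational action. I would argue this by functoriality: the principal polarisation of the Jacobian is canonical, so each $\sigma_{A_F}$ respects it and induces the compatibility $\langle\sigma P,\sigma Q\rangle=\theta(\sigma)\langle P,Q\rangle$; specialising to $G=G_K$, $\theta=\id$ recovers the classical Galois-equivariance, and the general case follows by transporting this along $\theta$ and the semilinear structure. Once equivariance is secured, Grothendieck's orthogonality is a statement about the underlying $\Z_l$-lattices and the pairing alone, so it transfers verbatim to the $G$-action.
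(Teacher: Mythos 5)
Your proposal is correct and follows essentially the same route as the paper: $G$-stability and the bottom two graded pieces are read off from Lemma \ref{limit of ST cor}, Theorem \ref{dual graph theorem} and Remark \ref{rem:toric part}, and the top piece is obtained via Grothendieck's orthogonality theorem and the $G$-equivariant Weil pairing, using the canonical principal polarisation to identify $A$ with its dual. Your final paragraph justifying the semilinear $G$-equivariance of the Weil pairing is a worthwhile addition, as the paper asserts this equivariance without proof.
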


\begin{proof}

It follows from Lemma \ref{limit of ST cor}, Theorem \ref{dual graph theorem} and Remark \ref{rem:toric part} that the filtration is $G$-stable and that the first two graded pieces are as claimed. Now by Grothendieck's orthogonality theorem \cite[Theorem 2.4]{SGA7I}, $T_l(A)^{I_F}$ is the orthogonal 
complement of $T_l(A)^t$ under the Weil pairing
\begin{equation*}\label{weil pairing}
  T_l(A)\times T_l(A)\rightarrow \Z_l(1)
\end{equation*}
 (here we use the canonical principal polarisation to identify $A$ with its dual).  Since the Weil pairing is $G$-equivariant this identifies the quotient $T_l(A)/T_l(A)^{I_F}$ with \[\textup{Hom}_{\Z_l}(H^1(\Upsilon,\Z) \tensor_\Z\Z_l(1),\Z_l(1))=H_1(\Upsilon,\Z) \tensor_\Z\Z_l\]
which completes the proof. 
\end{proof}

\begin{remark}[Proof of Theorem \ref{main}] 

That the filtration \eqref{eq1} is independent of $F$ follows from its characterisation in terms of the identity component of the N\'eron model in  \cite[IX, \S12]{SGA7I}, combined with the fact that the identity component of  the N\'eron model of a semistable abelian variety commutes with base change. (Alternatively, this can also be seen by considering Frobenius eigenvalues on the graded pieces.)
To deduce our main theorem, we take $C/F$ the base change of a (positive genus) curve over $K$ which becomes semistable over $F$, and take $G=G_K$ acting as in Example \ref{galois example1} (cf. also Remark \ref{assumptionshold}) throughout this section: this gives the claimed description of the graded pieces and the Tate module decomposition. 
The explicit formula for the action on non-singular points of $\cC_{k_F}(\bar{k}_F)$ follows from Theorem \ref{geommain}(3).

\end{remark}


\begin{thebibliography}{CFKS}

\bibitem[\webtitle{BLR}{https://doi.org/10.1007/978-3-642-51438-8}]{BLR}
 S.\,Bosch, W.\,L\"utkebohmert, M.\,Raynaud, 
N\'eron Models,
Erg. Math., vol. 21, Springer, Berlin, 1990.

\bibitem[\webtitle{BW}{https://doi.org/10.1017/S0017089516000057}]{BW}
I. Bouw, S. Wewers,
Computing $L$-functions and semistable reduction of superelliptic curves,
Glasgow Math. J. 59, issue 1 (2017), 77--108. 

\bibitem[\webtitle{CFKS}{https://doi.org/10.1090/S1056-3911-09-00504-9}]{CFKS}
J. Coates, T. Fukaya, K. Kato, R. Sujatha,
Root numbers, Selmer groups and non-commutative Iwasawa theory,
J. Alg. Geom. 19 (2010), 19--97.

\bibitem[\webtitle{DM}{http://www.numdam.org/item?id=PMIHES_1969__36__75_0}]{DM}
P.\,Deligne, D.\,Mumford, 
The irreducibility of the space of curves of given genus,
Publ. Math. IH\'ES, Tome 36 (1969), 75--109.

\bibitem[\webtitle{D\btwo}{https://doi.org/10.1093/qmath/hax053}]{hq}
T. Dokchitser, V. Dokchitser, 
Quotients of hyperelliptic curves and \'etale cohomology,
Quart. J. Math. 69, issue 2 (2018), 747--768.

\bibitem[\webtitle{M\btwo D\btwo}{http://arxiv.org/abs/1808.02936}]{M2D2}
T. Dokchitser, V. Dokchitser, C. Maistret, A. Morgan, 
Arithmetic of hyperelliptic curves over local fields,
preprint, 2018, arxiv: 1808.02936.




\bibitem[\webtitle{SGA7$_{\rm I}$}{https://doi.org/10.1007/BFb0068694}]{SGA7I}
A.\,Grothendieck, 
Mod\`eles de N\'eron et monodromie,
SGA7-I, Expose IX, LNM 288, Springer, 1972.


\bibitem[\webtitle{LT}{https://doi.org/10.1090/tran/6642}]{LT}
Q. Liu, J. Tong, N\'{e}ron models of algebraic curves, Trans. Amer. Math. Soc. 368 (2016), 7019--7043.

\bibitem[\webtitle{Pap}{http://dx.doi.org/10.1090/conm/605/12114}]{Pap}
M. Papikian, 
Non-archimedean uniformization and monodromy pairing,
Contemporary Math. 605 (2013), 123--160.

\bibitem[\webtitle{ST}{http://dx.doi.org/10.2307/1970722}]{ST}
J.-P. Serre, J. Tate, 
Good reduction of abelian varieties,
Annals of Math. 68 (1968), 492--517.

\bibitem[\webtitle{Ta}{https://doi.org/10.1090/pspum/033.2}]{Ta}
J. Tate, 
Number theoretic background,
in: Automorphic forms, representations and L-functions, Part 2
(ed. A. Borel and W. Casselman), Proc. Symp. in Pure Math.
33 (AMS, Providence, RI, 1979) 3-26.


\end{thebibliography}
\end{document}